\newcommand{\calC}{\mathcal{C}}
\newcommand{\calA}{\mathcal{A}}
\newcommand{\ceil}[1]{\lceil{#1}\rceil}
\newcommand{\rarr}{\rightarrow}
\newcommand\modulo{\mathop{\textrm{\rm mod}}\nolimits}
\newtheorem{theorem}             {Theorem}[section] 
\newtheorem{lemma}     [theorem] {Lemma}        
\newtheorem{conjecture}[theorem] {Conjecture} 
\newtheorem{claim}     [theorem] {Claim}
\def\beq{\begin{equation}}\def\eeq{\end{equation}}
\def\beqn{\begin{eqnarray}}\def\eeqn{\end{eqnarray}}
\author[M. R. Cerioli]{Márcia R. Cerioli}
\author[C. G. Fernandes]{Cristina G. Fernandes}
\author[O. Lee]{Orlando Lee}
\author[C. N. Lintzmayer]{Carla N. Lintzmayer}
\author[G. O. Mota]{Guilherme O. Mota}
\author[C. N. da Silva]{Cândida N. da Silva}
\address{Instituto de Matemática, Universidade Federal do Rio de Janeiro, Rio de
Janeiro, Brazil}
\email{cerioli@cos.ufrj.br}
\address{Instituto de Matem\'atica e Estat\'{\i}stica, Universidade de S\~ao
Paulo, S\~ao Paulo, Brazil}
\email{cris@ime.usp.br}
\address{Instituto de Computação,  Universidade Estadual de Campinas, Campinas,
Brazil}
\email{\{lee,carlanl\}@ic.unicamp.br}
\address{Centro de Matem\'atica, Computa\c c\~ao e Cogni\c c\~ao, Universidade
Federal do ABC, Santo Andr\'e, Brazil}
\email{g.mota@ufabc.edu.br} 
\address{Departamento de Computação, Universidade Federal de São Carlos, São
Carlos, Brazil}
\email{candida@ufscar.br}
\thanks{%
  C. G. Fernandes was partially supported by CNPq (Proc.~308116/2016-0).
  O. Lee was supported by CNPq (Proc. 311373/2015-1 and 425340/2016-3) and FAPESP (Proc. 2015/11937-9)
  C. N. Lintzmayer was supported by FAPESP (Proc.~2016/14132-4).
  G. O. Mota was supported by FAPESP (Proc.~2013/03447-6) and CNPq
  (Proc.~459335/2014-6).
}
\title{Edge-magic labelings for constellations \\ and armies of caterpillars}
\begin{document}
%\linenumbers 
\onehalfspacing
\date{\today, \currenttime}

\maketitle

\begin{abstract}
  Let $G=(V,E)$ be an $n$-vertex graph with $m$ edges.
  A function $f \colon V \cup E \rarr \{1,\ldots,n+m\}$ is an \emph{edge-magic
  labeling} of $G$ if $f$ is bijective and, for some integer $k$, we have
  $f(u)+f(v)+f(uv) = k$ for every edge $uv \in E$. 
  Furthermore, if $f(V) = \{1,\ldots,n\}$, then we say that $f$ is a \emph{super
  edge-magic labeling}. 
  A \emph{constellation}, which is a collection of stars, is \emph{symmetric}
  if the number of stars of each size is even except for at most one size.
  We prove that every symmetric constellation with an odd number of stars admits
  a super edge-magic labeling.
  We say that a caterpillar is of type $(r,s)$ if $r$ and $s$ are the sizes of
  its parts, where $r \leq s$.
  We also prove that every collection with an odd number of same-type
  caterpillars admits an edge-magic labeling.
\end{abstract}

\section{Introduction}

Let $G=(V,E)$ be an $n$-vertex graph with $m$ edges.
A function $f \colon V \cup E \rarr \{1,\ldots,n+m\}$ is an \emph{edge-magic
labeling} of $G$ if $f$ is bijective and, for some integer $k$, we have
$f(u)+f(v)+f(uv)= k$ for every edge $uv \in E$. 
The number $k$ is a \emph{magic constant} for $G$.  
Furthermore, if $f(V) = \{1,\ldots,n\}$, then $f$ is a \emph{super edge-magic
labeling}. 

A \emph{star} with $n$ vertices is a tree isomorphic to the complete bipartite
graph $K_{1,n-1}$.  
We denote by $|S|$ the size of a star $S$, which is its number of edges.
A \emph{caterpillar} is a tree composed by a central path and vertices directly
connected to this path.
Note that a star is also a caterpillar.

Edge-magic labelings were introduced by Kotzig and Rosa~\cite{1970-kotzig-rosa},
who proved that the following graphs are edge-magic: bipartite complete graphs
$K_{p,q}$ for all $p$, $q\geq 1$, cycles $C_n$ for all $n \geq 3$, paths $P_n$
for all $n \geq 2$, stars, caterpillars, and 1-regular graphs with an odd number
of edges.
Later~\cite{1972-kotzig-rosa} they proved that $K_n$ is edge-magic if and only
if $n = 2, 3, 5, 6$.
They also asked if any tree is edge-magic, a question that is open to this day.
The concept of super
edge-magic labelings was introduced by Enomoto, Lladó, Nakamigawa, and
Ringel~\cite{1998-enomoto-etal}, 
who proved that any $n$-vertex super edge-magic graph with $m$ edges must
satisfy $m \leq 2n-3$. 
Furthermore, they showed that $C_n$ is super edge-magic if and only if $n$ is
odd, $K_n$ is super edge-magic if and only if $n = 1,2,3$, and $K_{p,q}$ is
super edge-magic if and only if $p=1$ or $q=1$.
Note that this last result includes the stars.
They also verified that all trees with up to 15 vertices are super edge-magic,
and conjectured that any tree is super edge-magic.

In this paper we are interested in particular types of forests.
Kotzig~\cite{1971-kotzig} showed that if $G$ is a $3$-colorable edge-magic
graph, then any graph composed by the union of an odd number of copies of $G$ is
also edge-magic.
This directly implies that if $T$ is a path, a caterpillar, or a star, then a
forest composed by a union of an odd number of copies of $T$ is edge-magic.
Figueroa-Centeno, Ichishima and Muntaner-Batle~\cite{2002-figueroa-etal} showed
that a forest with $k$ copies of~$K_{1,n-1}$ is super edge-magic if $k$ is odd. 
They also proved that, if a forest consisting only of paths is super edge-magic,
then a graph composed by $k$ copies of such forest is also super edge-magic for
$k$ odd.
This implies that forests composed by the same stars (or paths) are super
edge-magic.
For more results about edge-magic labeling, we refer the reader to the
books~\cite{2017-book,2013-book}.

Our contribution is twofold: we investigate the problem of describing edge-magic
labelings in some forests of stars and some forests of caterpillars.
A \emph{constellation} is a forest whose components are stars.
We say a constellation is \emph{odd} if it consists of an odd number of stars.
Let $(S_{1},S_{2},\ldots,S_{k})$ denote a constellation with $k$ stars.
The following conjecture was posed in 2002.
\begin{conjecture}[Lee--Kong~\cite{2002-lee-kong}]\label{conj:LeeKong}
  Every odd constellation is super edge-magic.
\end{conjecture}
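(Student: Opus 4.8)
The plan is to prove the conjecture via the standard reformulation of super edge-magic labelings in terms of consecutive edge-sums: a graph $G$ on $n$ vertices and $m$ edges is super edge-magic if and only if it admits a bijection $g\colon V\to\{1,\dots,n\}$ whose edge-sum set $\{g(u)+g(v):uv\in E\}$ consists of $m$ consecutive integers \cite{2002-figueroa-etal}. Applied to an odd constellation $(S_1,\dots,S_k)$, this turns the labeling problem into a purely combinatorial packing problem that I would attack head-on.

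Set $t_i=|S_i|$, so $m=\sum_{i} t_i$ and $n=m+k$. The first step is to fix the centers of the stars to be the $k$ largest labels $\{m+1,\dots,m+k\}$ and to distribute $\{1,\dots,m\}$ among the leaves. I would seek a solution in which each leaf-set $L_i$ is an interval $[p_i+1,p_i+t_i]$ of $\{1,\dots,m\}$, so that star $S_i$ contributes the block of edge-sums $c_i+L_i=[c_i+p_i+1,\,c_i+p_i+t_i]$. The labeling is then super edge-magic exactly when these $k$ blocks tile a window $\{a+1,\dots,a+m\}$ of $m$ consecutive integers. Writing the leaf-blocks and the sum-blocks as prefix offsets of the sizes $t_i$ under two orderings of the stars, this demand becomes the system $c_i=a+q_i-p_i$, where $p_i$ and $q_i$ run over the prefix offsets of $\{t_1,\dots,t_k\}$ and the $c_i$ must form a permutation of $\{m+1,\dots,m+k\}$. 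Everything thus reduces to finding two orderings of the star sizes whose part-by-part prefix-offset differences $q_i-p_i$ realise a block of $k$ consecutive integers, whose position pins down $a$.

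The second step is to exhibit such orderings, and here the oddness of $k$ enters. I would sort the sizes and build the window outward from its centre: one distinguished star anchors the middle of the window and fixes $a$, after which the remaining stars are inserted in balanced pairs, each pair extending the window once to the left and once to the right while its two centers are supplied in matching order. Each pair is chosen so that the two differences $q_i-p_i$ it contributes are consecutive integers, which keeps the accumulated centers a contiguous block of $\{m+1,\dots,m+k\}$, while the lone unpaired star absorbs the parity. For a symmetric constellation equal-size stars pair off perfectly, and this is precisely the mechanism behind the known symmetric case; the goal now is to drive it through for an arbitrary multiset of sizes, broadening the interval ansatz to residue-structured leaf-sets should intervals prove too rigid.

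The hard part, and the reason the conjecture is still open, is the feasibility of this rearrangement for \emph{arbitrary} sizes. When the $t_i$ are very unequal, packing blocks of wildly different lengths so that the sum-blocks tile a window \emph{and} the induced centers remain contiguous forces one to track a chain of carries with no symmetry to fall back on, and a careless ordering can strand a large star that no longer fits anywhere. I would therefore isolate the rearrangement as a clean statement---every odd multiset $\{t_1,\dots,t_k\}$ of positive integers admits two orderings for which the part-by-part differences of the two prefix-offset sequences form a block of $k$ consecutive integers---and prove it by induction on $k$, removing the largest part together with a carefully matched partner so that the residual instance is again an odd multiset of the same type. Showing that this peeling can always be carried out without getting stuck is the crux on which the whole argument turns.
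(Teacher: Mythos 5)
You are addressing Conjecture~\ref{conj:LeeKong}, which the paper does not prove: it is stated as open (since 2002), and the paper's own contribution, Theorem~\ref{thm:constellations}, establishes only the special case of odd \emph{symmetric} constellations, by induction on the number of vertices --- one leaf is removed from each non-trivial star, the smaller constellation is labeled inductively via the consecutive-sums criterion (Lemma~\ref{lem:FigueroaEtAl}, from~\cite{2001-figueroa-etal}, not~\cite{2002-figueroa-etal}), and the removed leaves are relabeled cyclically. Your proposal is candid that it is a strategy rather than a proof: the ``peeling'' induction at the end is never carried out, so as it stands it proves no case of the conjecture, not even the symmetric one.

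More seriously, the combinatorial lemma you isolate as the crux is false as stated, so the reduction it rests on cannot work. If every leaf-set is an interval $[p_i+1,p_i+t_i]$ of $\{1,\dots,m\}$ and the sum-blocks tile a window, then, as you derive, $c_i=a+q_i-p_i$, so the differences $q_i-p_i$ of the two prefix-offset sequences must form $k$ consecutive integers. Now take the simplest odd constellation beyond trivial stars: $k$ stars all of the same size $t\geq 2$, say three copies of $K_{1,2}$. Every prefix offset of the multiset $\{t,t,\dots,t\}$ is a multiple of $t$, hence every difference $q_i-p_i$ is a multiple of $t$, and $k\geq 2$ multiples of $t\geq 2$ can never be $k$ consecutive integers. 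So no pair of orderings exists, even though these constellations are long known to be super edge-magic. The interval ansatz is thus not merely ``too rigid'' for wildly unequal sizes --- it already fails in the easiest cases, which is exactly why all known constructions, including the paper's proof of the symmetric case, spread the leaves of each star in residue-structured sets (labels advancing cyclically with step tied to the number of stars) rather than in contiguous blocks. Your closing hedge about residue-structured leaf-sets is where the actual work begins: the tiling condition must be reformulated for such sets, and the resulting rearrangement statement proved for \emph{arbitrary} odd multisets of sizes --- which is, in essence, the open conjecture itself, and which the symmetric hypothesis of Theorem~\ref{thm:constellations} is precisely what lets the paper avoid.
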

Lee and Kong~\cite{2002-lee-kong} proved that some constellations with less than
five stars are super edge-magic. 
%$(S_{p},S_{q})$ for $q \equiv 0 \ (\modulo p+1)$, $(S_1,S_p,S_q)$ for $p=1,2,q$,
%$(S_2,S_p,S_q)$ for $p=2,3$, $(S_1,S_1,S_p,S_q)$ for $p=2,3$, and
%$(S_2,S_2,S_p,S_q)$ for $p=1,2$.
In what follows, let $(S_{1},S_{2},\ldots,S_{k})$ be a constellation with
$|V(S_i)|=n_i$ for $1\leq i\leq k$.
Zhenbin and Chongjin~\cite{2013-zhenbin-chongjin} proved that, if
$k=2q+1$, $n_i = a$ for $1\leq i \leq q+1$, and $n_i = b$ for $q+2 \leq i \leq
2q+1$, then $(S_{1},S_{2},\ldots,S_{k})$ is super edge-magic.
Recently, Manickam, Marudai, and Kala~\cite{2016-manickam-etal} showed that, for
$r \geq 3$ odd, if $n_1,n_2,\ldots,n_k$ is an increasing sequence of positive
integers with $n_i = 1+(i-1)d$ for $1 \leq i \leq k$ and any $d$, then
$(S_{1},S_{2},\ldots,S_{k})$ is super edge-magic.
For instance, for $n_1=1,\dots,n_k=k$, the constellation $(S_1,S_2,\ldots,S_k)$
is super edge-magic.
We are interested in \emph{symmetric} constellations, which are those
such that the number of stars of each size is even except for at most one size.
In our first main result (Theorem~\ref{thm:constellations}), we prove that every
odd symmetric constellation has a super edge-magic labeling, providing a
positive result regarding Conjecture~\ref{conj:LeeKong}.

Our second result concerns forests of caterpillars.
Being a tree, a caterpillar is also a bipartite graph, so we say it is of
\emph{type $(r,s)$} if $r$ and $s$ are the sizes of the parts of its unique
bipartition, where $r \leq s$.
An \emph{army of caterpillars} is a forest whose components are caterpillars.
It is \emph{odd} if it has an odd number of components. 
An army of caterpillars is \emph{uniform} if all of its caterpillars are of the
same type.
In Theorem~\ref{thm:caterpillars}, we prove that every odd uniform army of
caterpillars has an edge-magic labeling.

This paper is organized as follows.
In Section~\ref{sec:constellations} we prove Theorem~\ref{thm:constellations},
which deals with constellations. The result concerning armies of caterpillars 
(Theorem~\ref{thm:caterpillars}) is proved in Section~\ref{sec:caterpillars}.

\section{Constellations}
\label{sec:constellations}

When dealing with super edge-magic labelings, the following result given by
Figueroa-Centeno, Ichishima, and Muntaner-Batle~\cite{2001-figueroa-etal} turns
out to be very useful and it will be used in this section.

\begin{lemma}[\cite{2001-figueroa-etal}]\label{lem:FigueroaEtAl}
  An $n$-vertex graph $G=(V,E)$ with $m$ edges is super edge-magic if and only
  if there exists a bijective function $f \colon V \rarr \{1,\ldots,n\}$ such
  that the set $L = \{f(u) + f(v) \colon uv \in E\}$ consists of $m$ consecutive
  integers.
  In such a case, $f$ extends to a super edge-magic labeling of~$G$. 
\end{lemma}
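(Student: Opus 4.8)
The plan is to prove the two implications of the equivalence separately, using the single algebraic observation that the magic condition $f(u)+f(v)+f(uv)=k$ rewrites as $f(uv)=k-\big(f(u)+f(v)\big)$, so that edge labels and endpoint-sums are related by an affine map with slope $-1$. Throughout I write $g$ for a candidate labeling of $V\cup E$ and reserve $f$ for its restriction to $V$. The crucial structural fact about a super edge-magic labeling is that once $g(V)=\{1,\dots,n\}$ and $g$ is a bijection onto $\{1,\dots,n+m\}$, the edges must receive exactly the top block $\{n+1,\dots,n+m\}$, which is a set of $m$ consecutive integers.

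For the forward direction, suppose $G$ is super edge-magic via $g$ with magic constant $k$, and set $f=g|_V$. Then $f$ is a bijection onto $\{1,\dots,n\}$ by the definition of super edge-magic. For each edge $uv$ the magic equation gives $f(u)+f(v)=k-g(uv)$, so as $g(uv)$ runs over the edge labels $\{n+1,\dots,n+m\}$ the sums $f(u)+f(v)$ run over $\{k-(n+m),\dots,k-(n+1)\}$. Hence $L=\{f(u)+f(v):uv\in E\}$ is a set of $m$ consecutive integers, as required.

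For the converse, suppose $f\colon V\to\{1,\dots,n\}$ is a bijection with $L=\{f(u)+f(v):uv\in E\}$ equal to a block $\{s,s+1,\dots,s+m-1\}$ of $m$ consecutive integers. Because $|E|=m$ and the $m$ endpoint-sums take exactly $m$ distinct values, the map $uv\mapsto f(u)+f(v)$ is a bijection from $E$ onto $L$. I would then set $k=n+m+s$ and define $g(uv)=k-\big(f(u)+f(v)\big)$ for every edge, keeping $g=f$ on $V$. The smallest sum $s$ produces the largest label $n+m$ and the largest sum $s+m-1$ produces $n+1$, so $g$ maps the edges bijectively onto $\{n+1,\dots,n+m\}$; together with $f$ on the vertices, $g$ is a bijection onto $\{1,\dots,n+m\}$ satisfying the magic equation with constant $k$ and $g(V)=\{1,\dots,n\}$. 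Thus $g$ is a super edge-magic labeling extending $f$.

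No step presents a genuine obstacle: the argument is essentially a rewriting of the definition. The only point requiring a moment of care is the counting remark in the converse --- that a set of $m$ consecutive integers arising as the image of $m$ edge-sums forces those sums to be pairwise distinct --- which is what guarantees the resulting edge labeling is a bijection rather than merely a function.
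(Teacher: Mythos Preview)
Your proof is correct and is essentially the standard argument for this well-known characterization. Note, however, that the paper does not supply its own proof of this lemma: it is quoted from Figueroa-Centeno, Ichishima, and Muntaner-Batle~\cite{2001-figueroa-etal} and used as a black box in the proof of Theorem~\ref{thm:constellations}. So there is nothing to compare against in this paper, but what you wrote matches the original source's reasoning: the key observation is exactly your affine relation $f(uv)=k-\big(f(u)+f(v)\big)$, together with the remark that in a super edge-magic labeling the edge labels occupy precisely the block $\{n+1,\dots,n+m\}$.
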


Let $\calC$ be an odd symmetric constellation, and let $S$ be the only star that
appears in~$\calC$ an odd number of times.
Let $p = |\calC|$ and $r$ be such that $p = 2r-1$. 
Let $S_1,\ldots,S_p$ be the stars in $\calC$ so that~$S_r = S$, $|S_i| =
|S_{p-i+1}|$ for every $i = 1,\ldots,r-1$, and $|S_i| \leq |S_{i+1}|$ for every
$i = 1,\ldots,r-2$.
Note that such an order exists because $\calC$ is odd and symmetric.
Let $c_i$ denote the central vertex of the star $S_i$.
From now on, we consider that any odd symmetric constellation is a sequence of
stars described as in such order.

A super edge-magic labeling $f$ of $\calC$ is \emph{standard} if $f(c_i) = i$
for $i=1,\ldots,p$ and, whenever $n > p$, the smallest sum in $L = \{f(u) + f(v)
\mid uv \in E(\calC)\}$ is $r+p+1$.

The next theorem shows that every odd symmetric constellation has a standard super
edge-magic labeling.
Figure~\ref{fig:example_constellation} shows an example of such labeling for a
constellation with seven stars.

  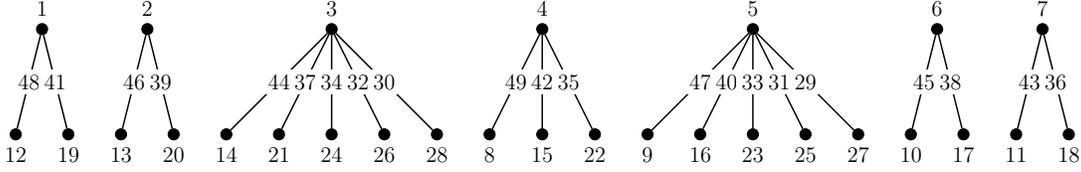
\begin{figure}
    \centering
    \scalebox{.7}{%%%%%%%%%%%%%%%%%%%%%%%%%%%%%%%%%%%%%%%%%%%%%%%%%%%%%%%%%%%%
% Created: sáb 24 jun 2017 16:29:56 -03
%
% Author: Carla N. Lintzmayer, carlanl@ic.unicamp.br
%
%%%%%%%%%%%%%%%%%%%%%%%%%%%%%%%%%%%%%%%%%%%%%%%%%%%%%%%%%%%%

\begin{tikzpicture}[scale=1]

    \GraphInit[vstyle=Classic]
    \SetGraphUnit{10}
    \tikzset{VertexStyle/.append style = {minimum size = 6pt, inner sep = 0pt}}
    \tikzset{LabelStyle/.append style = {fill=white, minimum size=0em,
    rectangle, inner sep=0.1cm}}

    \Vertex[L=1,x=0.5,y=2,Lpos=90]{A1}

    \Vertex[L=12,x=0,y=0,Lpos=-90]{A2}
    \Vertex[L=19,x=1,y=0,Lpos=-90]{A3}

    \Edge[label=48](A1)(A2)
    \Edge[label=41](A1)(A3)

\begin{scope}[xshift=2cm]
    \Vertex[L=2,x=0.5,y=2,Lpos=90]{A1}

    \Vertex[L=13,x=0,y=0,Lpos=-90]{A2}
    \Vertex[L=20,x=1,y=0,Lpos=-90]{A3}

    \Edge[label=46](A1)(A2)
    \Edge[label=39](A1)(A3)
\end{scope}

\begin{scope}[xshift=4cm]
    \Vertex[L=3,x=2,y=2,Lpos=90]{A1}

    \Vertex[L=14,x=0,y=0,Lpos=-90]{A2}
    \Vertex[L=21,x=1,y=0,Lpos=-90]{A3}
    \Vertex[L=24,x=2,y=0,Lpos=-90]{A4}
    \Vertex[L=26,x=3,y=0,Lpos=-90]{A5}
    \Vertex[L=28,x=4,y=0,Lpos=-90]{A6}

    \Edge[label=44](A1)(A2)
    \Edge[label=37](A1)(A3)
    \Edge[label=34](A1)(A4)
    \Edge[label=32](A1)(A5)
    \Edge[label=30](A1)(A6)
\end{scope}

\begin{scope}[xshift=9cm]
    \Vertex[L=4,x=1,y=2,Lpos=90]{A1}

    \Vertex[L=8,x=0,y=0,Lpos=-90]{A2}
    \Vertex[L=15,x=1,y=0,Lpos=-90]{A3}
    \Vertex[L=22,x=2,y=0,Lpos=-90]{A4}

    \Edge[label=49](A1)(A2)
    \Edge[label=42](A1)(A3)
    \Edge[label=35](A1)(A4)
\end{scope}

\begin{scope}[xshift=12cm]
    \Vertex[L=5,x=2,y=2,Lpos=90]{A1}

    \Vertex[L=9,x=0,y=0,Lpos=-90]{A2}
    \Vertex[L=16,x=1,y=0,Lpos=-90]{A3}
    \Vertex[L=23,x=2,y=0,Lpos=-90]{A4}
    \Vertex[L=25,x=3,y=0,Lpos=-90]{A5}
    \Vertex[L=27,x=4,y=0,Lpos=-90]{A6}

    \Edge[label=47](A1)(A2)
    \Edge[label=40](A1)(A3)
    \Edge[label=33](A1)(A4)
    \Edge[label=31](A1)(A5)
    \Edge[label=29](A1)(A6)
\end{scope}

\begin{scope}[xshift=17cm]
    \Vertex[L=6,x=0.5,y=2,Lpos=90]{A1}

    \Vertex[L=10,x=0,y=0,Lpos=-90]{A2}
    \Vertex[L=17,x=1,y=0,Lpos=-90]{A3}

    \Edge[label=45](A1)(A2)
    \Edge[label=38](A1)(A3)
\end{scope}

\begin{scope}[xshift=19cm]
    \Vertex[L=7,x=0.5,y=2,Lpos=90]{A1}

    \Vertex[L=11,x=0,y=0,Lpos=-90]{A2}
    \Vertex[L=18,x=1,y=0,Lpos=-90]{A3}

    \Edge[label=43](A1)(A2)
    \Edge[label=36](A1)(A3)
\end{scope}

\end{tikzpicture}}
    \caption{Example of a standard super edge-magic labeling with magic constant
    61 on an odd symmetric constellation.}
    \label{fig:example_constellation}
  \end{figure}

\begin{theorem}\label{thm:constellations}
  Every odd symmetric constellation has a standard super edge-magic labeling. 
\end{theorem}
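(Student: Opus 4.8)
The plan is to apply Lemma~\ref{lem:FigueroaEtAl}: it suffices to exhibit a bijection $f\colon V(\calC)\to\{1,\dots,n\}$ for which the set of edge-sums $L=\{f(u)+f(v):uv\in E(\calC)\}$ is exactly the block of $m$ consecutive integers $\{r+p+1,\dots,r+p+m\}$, since then $f$ extends to a super edge-magic labeling and, having $f(c_i)=i$ with smallest sum $r+p+1$, it is standard. I would fix $f(c_i)=i$ for $i=1,\dots,p$, so that the central labels occupy $\{1,\dots,p\}$ and the $m=n-p$ leaves must receive the labels $\{p+1,\dots,n\}$; every edge-sum then has the form $i+(\text{leaf label})$.

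I would distribute the leaf labels in \emph{layers}. For $j\ge 1$, let $C_j=\{i:|S_i|\ge j\}$ be the set of centres of stars having at least $j$ leaves, and let $a_j=|C_j|$, so that $\sum_j a_j=m$. In layer $j$ each star of $C_j$ receives one further leaf; I assign to layer $j$ the block $B_j$ of $a_j$ consecutive leaf labels that continues the previous layers (so $B_1$ begins at $p+1$), and I want the sums produced in layer $j$ to form a block of $a_j$ consecutive integers continuing the previous ones. The crucial structural observation, which drives everything, is that the prescribed ordering forces $C_j$ to be almost an interval centred at $r$: since $|S_1|\le\cdots\le|S_{r-1}|$ and $|S_i|=|S_{p+1-i}|$, the set $C_j$ is symmetric about $r$ and is either a full interval $\{r-t,\dots,r+t\}$ (when $|S_r|\ge j$) or such an interval with its centre removed, $\{r-t,\dots,r+t\}\setminus\{r\}$ (when $|S_r|<j$).

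Given this shape, I would match centres to leaves inside a layer by an \emph{even/odd interleaving}. Writing $B_j=\{b,\dots,b+a_j-1\}$, in the full-interval case I match centre $r+s$ with leaf $b+s$ for $s=0,\dots,t$ (giving the even offsets $r+b+2s$) and centre $r-s$ with leaf $b+2t+1-s$ for $s=1,\dots,t$ (giving the odd offsets $r+b+2t+1-2s$); together the even and odd offsets are exactly $\{0,1,\dots,2t\}$, so the sums are the consecutive integers $r+b,\dots,r+b+2t$. The centre-removed case is handled by the analogous matching $r+s\mapsto b+s-1$ and $r-s\mapsto b+2t-s$ for $s=1,\dots,t$, again producing consecutive sums starting at $r+b$. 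In both cases the smallest sum is $r+b$ and the largest is $r+b+a_j-1$, so since each layer's leaf block abuts the next, the sum-blocks abut as well: starting from $b=p+1$ in layer~$1$ they chain up to cover precisely $\{r+p+1,\dots,r+p+m\}$, while the leaf blocks cover $\{p+1,\dots,n\}$.

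The remaining work is routine bookkeeping: checking that $f$ is a bijection (the layers use disjoint leaf blocks, and within a layer the matching is a bijection), that star $S_i$ participates in exactly the layers $1,\dots,|S_i|$ and hence receives $|S_i|$ distinct leaves, and that consecutive layers chain as claimed; Lemma~\ref{lem:FigueroaEtAl} then finishes the argument. I expect the main obstacle to be the structural fact that $C_j$ is an interval about $r$ up to its centre: this is exactly where the sorted-and-symmetric ordering is used, and it is what makes the two-line interleaving yield consecutive sums, whereas for a general symmetric $C_j$ no such clean matching need exist.
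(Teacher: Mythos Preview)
Your proposal is correct and is essentially the paper's proof: the paper proceeds by induction on $n$, each inductive step removing one leaf from every non-trivial star---this is precisely one of your layers---and assigning to those $p'$ leaves the block $\{p+1,\dots,p+p'\}$ via the same cyclic/interleaved matching you describe (the paper's index $\ell=\lceil(p'+1)/2\rceil$ picks out your centre $r$ in the full-interval case), while shifting all previously placed leaf labels up by $p'$. The structural observation that the participating centres form a (possibly punctured) interval about $r$ is exactly what the paper uses, so the two arguments coincide up to the induction-versus-direct-layering presentation.
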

\begin{proof}
  Let $\calC$ be an odd symmetric constellation.
  The proof is by induction on the number $n$ of vertices of $\calC$. 
  If $n = p$, then $\calC$ consists of only the centers of the stars. 
  Set $f(c_i) = i$ for $i=1,\ldots,p$.
  Trivially this is a standard super edge-magic labeling of $\calC$. 

  Suppose $n > p$, that is, $\calC$ contains at least one non-trivial star. 
  Let $p' > 0$ be the number of non-trivial stars in $\calC$.
  %So $p' > 0$.
  Note that $p'$ is odd if the central star $S$ is nontrivial; otherwise $p'$ is
  even.
  Let $\calC'$ be the constellation obtained from $\calC$ by removing a leaf
  from each non-trivial star in $\calC$.
  Call $u_1,\ldots,u_{p'}$ these removed leaves, in the order of their stars
  in~$\calC$, and let~$U = \{u_1,\ldots,u_{p'}\}$. 
  Note that $\calC'$ is odd and symmetric, and the order of the stars in $\calC$
  has the same properties in $\calC'$.
  Let $n'$ be the number of vertices in $\calC'$.
  Then $n' = n - p'$, and we can apply induction on $\calC'$ to obtain a
  standard super edge-magic labeling $f'$ for $\calC'$. 

  We define the standard labeling $f$ on $\calC$ as follows. 
  First, let $f(c_i) = f'(c_i) = i$ for $i = 1,\ldots,p$.  
  Second, for every vertex $v$ in $\calC'$ which is not a star center, let $f(v)
  = f'(v) + p'$.
  See Figure~\ref{fig:constellation_induction_step}.
  Now we describe the labels for vertices in $U$.
  Let $\ell = \ceil{(p'+1)/2}$.  
  The idea is to label the vertices in $U$ cyclically starting at $u_\ell$, 
  with labels $p+1,\ldots,p+p'$. 
  Set $f(u_j) = p+j-\ell+1$ for $j = \ell,\ldots,p'$
  and ${f(u_j) = (p+p'-\ell+1)+j = f(u_{p'}) + j}$ for $j = 1,\ldots,\ell-1$.
  Let us show that $f$ can be extended to a standard super edge-magic labeling. 
  That is, let us show that the set~${L = \{f(u) + f(v) \mid uv \in E(\calC)\}}$
  consists of consecutive integers starting from $r+p+1$.

  \begin{figure}
    \centering
    \scalebox{.7}{%%%%%%%%%%%%%%%%%%%%%%%%%%%%%%%%%%%%%%%%%%%%%%%%%%%%%%%%%%%%
% Created: sáb 24 jun 2017 16:29:56 -03
%
% Author: Carla N. Lintzmayer, carlanl@ic.unicamp.br
%
%%%%%%%%%%%%%%%%%%%%%%%%%%%%%%%%%%%%%%%%%%%%%%%%%%%%%%%%%%%%

\begin{tikzpicture}[scale=1]

    \GraphInit[vstyle=Classic]
    \SetGraphUnit{10}
    \tikzset{VertexStyle/.append style = {minimum size = 6pt, inner sep = 0pt}}
    \tikzset{LabelStyle/.append style = {fill=white, minimum size=0em,
    rectangle, inner sep=0.1cm}}

    \Vertex[L=1,x=0,y=2,Lpos=90]{A1}
    \Vertex[L=2,x=1,y=2,Lpos=90]{B1}
    \Vertex[L=3,x=2.5,y=2,Lpos=90]{C1}
    \Vertex[L=4,x=4,y=2,Lpos=90]{D1}
    \Vertex[L=5,x=5.5,y=2,Lpos=90]{E1}
    \Vertex[L=6,x=7,y=2,Lpos=90]{F1}
    \Vertex[L=7,x=8,y=2,Lpos=90]{G1}

    \Vertex[L=10,x=1.75,y=0,Lpos=-90]{C2}
    \Vertex[L=12,x=2.5,y=0,Lpos=-90]{C3}
    \Vertex[L=14,x=3.25,y=0,Lpos=-90]{C4}
    \Vertex[L=8,x=4,y=0,Lpos=-90]{D2}
    \Vertex[L=9,x=4.75,y=0,Lpos=-90]{E2}
    \Vertex[L=11,x=5.5,y=0,Lpos=-90]{E3}
    \Vertex[L=13,x=6.25,y=0,Lpos=-90]{E4}

    \Edge(C1)(C2)
    \Edge(C1)(C3)
    \Edge(C1)(C4)
    \Edge(D1)(D2)
    \Edge(E1)(E2)
    \Edge(E1)(E3)
    \Edge(E1)(E4)

\begin{scope}[xshift=10.5cm]
    \Vertex[L=1,x=0,y=2,Lpos=90]{A1}
    \Vertex[L=2,x=1,y=2,Lpos=90]{B1}
    \Vertex[L=3,x=3.125,y=2,Lpos=90]{C1}
    \Vertex[L=4,x=5.625,y=2,Lpos=90]{D1}
    \Vertex[L=5,x=8.125,y=2,Lpos=90]{E1}
    \Vertex[L=6,x=10.25,y=2,Lpos=90]{F1}
    \Vertex[L=7,x=11.25,y=2,Lpos=90]{G1}

    \Vertex[L=$f(u_1)$,x=0,y=0,Lpos=-90]{A2}
    \Vertex[L=$f(u_2)$,x=1,y=0,Lpos=-90]{B2}
    \Vertex[L=$f(u_3)$,x=2,y=0,Lpos=-90]{C2}
    \Vertex[L=17,x=2.75,y=0,Lpos=-90]{C3}
    \Vertex[L=19,x=3.5,y=0,Lpos=-90]{C4}
    \Vertex[L=21,x=4.25,y=0,Lpos=-90]{C5}
    \Vertex[L=$f(u_4)$,x=5.25,y=0,Lpos=-90]{D2}
    \Vertex[L=15,x=6,y=0,Lpos=-90]{D3}
    \Vertex[L=$f(u_5)$,x=7,y=0,Lpos=-90]{E2}
    \Vertex[L=16,x=7.75,y=0,Lpos=-90]{E3}
    \Vertex[L=18,x=8.5,y=0,Lpos=-90]{E4}
    \Vertex[L=20,x=9.25,y=0,Lpos=-90]{E5}
    \Vertex[L=$f(u_6)$,x=10.25,y=0,Lpos=-90]{F2}
    \Vertex[L=$f(u_7)$,x=11.25,y=0,Lpos=-90]{G2}

    \Edge(A1)(A2)
    \Edge(B1)(B2)
    \Edge(C1)(C2)
    \Edge(C1)(C3)
    \Edge(C1)(C4)
    \Edge(C1)(C5)
    \Edge(D1)(D2)
    \Edge(D1)(D3)
    \Edge(E1)(E2)
    \Edge(E1)(E3)
    \Edge(E1)(E4)
    \Edge(E1)(E5)
    \Edge(F1)(F2)
    \Edge(G1)(G2)
\end{scope}

\end{tikzpicture}}
    \caption{Labeling for a constellation $\calC$ on the right obtained from the
    labeling of the constellation $\calC'$ on the left.}
    \label{fig:constellation_induction_step}
  \end{figure}
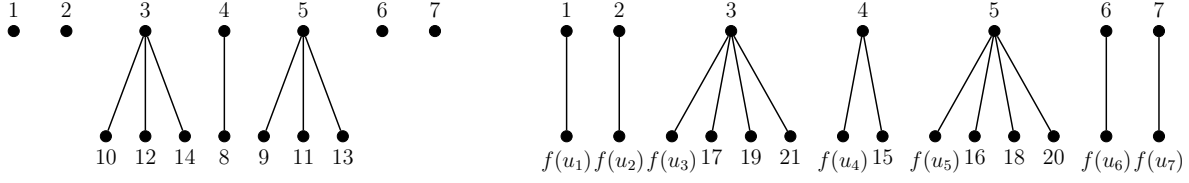

  As $f'$ is a standard super edge-magic labeling by induction, by
  Lemma~\ref{lem:FigueroaEtAl} we know that $L' = \{f'(u) + f'(v) \mid uv \in
  E(\calC')\}$ consists of consecutive integers starting from $r+p+1$. 
  By the way we defined $f$ on $V(\calC')$, the set $L_1 = \{f(u) + f(v) \mid uv
  \in E(\calC')\}$ consists of the consecutive integers starting from $r+p+1+p'$
  (the same consecutive integers plus $p'$). 
  Let us argue that $L_2 = \{f(u) + f(v) \mid uv \in E(\calC) \setminus
  E(\calC')\}$ consists of the $p'$ consecutive integers starting from $r+p+1$. 

  If $S_r$ is non-trivial, then $u_\ell$ is in $S_r$ and thus $f(c_r)+f(u_\ell)
  = r+p+1$. 
  For ${j = \ell+1,\ldots,p'}$, vertex $u_j$ is in $S_{r+j-\ell}$ (which is
  non-trivial by the order of the stars in~$\calC$), and therefore
  $f(c_{r+j-\ell})+f(u_j) = (r+j-\ell) + (p+j-\ell+1) = r+p+1 + 2(j-\ell)$. 
  Now, $u_1$ is in $S_q$ for $q = p-(r+p'-\ell) + 1 = r-p'+\ell$ and hence
  $f(c_q) + f(u_1) = q + (p+p'-\ell+2) = p+r+2$.
  For $j = 2,\ldots,\ell-1$, vertex $u_j$ is in $S_{q+j-1}$ (which is also
  non-trivial), and so $f(c_{q+j-1})+f(u_j) = (q+j-1) + (p+p'-\ell+j+1) =
  p+r+2j$.
  
  If $S_r$ is trivial, then $u_\ell$ is in $S_{r+1}$ and thus
  $f(c_{r+1})+f(u_\ell) = (r+1)+p+1 = p+r+2$. 
  For $j = \ell+1,\ldots,p'$, vertex $u_j$ is in $S_{r+1+j-\ell}$ (which is
  non-trivial by the order of the stars in~$\calC$), hence
  $f(c_{r+1+j-\ell})+f(u_j) = (r+1+j-\ell) + (p+j-\ell+1) = r+p+2 + 2(j-\ell)$. 
  Now, let~$q = p - (r+1+p'-\ell) + 1 = r-1-p'+\ell$. 
  For $j = 1,\ldots,\ell-1$, vertex $u_j$ is in the non-trivial star
  $S_{q+j-1}$, and so $f(c_{q+j-1})+f(u_j) = (q+j-1) + (p+p'-\ell+j+1) =
  p+r-1+2j$.

  This concludes the proof that $f$ can be extended to a standard super
  edge-magic labeling for $\calC$. 
\end{proof}

\section{Armies of caterpillars}
\label{sec:caterpillars}

Let $\calA$ be an odd uniform army of caterpillars of type $(r,s)$.
Let $p = |\calA|$ and let $C_1, \ldots, C_p$ be the caterpillars in $\calA$.
We will consider the following notation, which is depicted in
Figure~\ref{fig:army_of_caterpillars}.
Let $u_{i1},\ldots,u_{ir}$ denote the vertices of one part of the caterpillar
$C_i$ and $v_{i1},\ldots,v_{is}$ denote the vertices of the other part, so that
if $u_{ij}v_{ik} \in E(C_i)$, then ${u_{ij'}v_{ik'} \notin E(C_i)}$ for any $j'$
and $k'$ such that (i) $j'> j$ and $k' < k$ or (ii) $j' < j$ and $k' > k$.
Note that since caterpillars are planar graphs, there is always such an order 
of its vertices.
Furthermore, let $e_{i(r+s-1)}=u_{i1}v_{i1}$, let $e_{i(r+s-2)}$ be the next
edge in the considered order of vertices, and so on, until
$e_{i1}=u_{ir}v_{is}$.
Thus, the edges of $C_i$ are, in order, $e_{{i(r+s-1})}, e_{i(r+s-2)}, \dots,
e_{i1}$.  
%See Figure~\ref{fig:army_of_caterpillars} for an example of an odd uniform army
%of caterpillars with such notation.

\begin{figure}
  \centering
  \scalebox{.7}{%%%%%%%%%%%%%%%%%%%%%%%%%%%%%%%%%%%%%%%%%%%%%%%%%%%%%%%%%%%%
% Created: sáb 24 jun 2017 16:29:56 -03
%
% Author: Carla N. Lintzmayer, carlanl@ic.unicamp.br
%
%%%%%%%%%%%%%%%%%%%%%%%%%%%%%%%%%%%%%%%%%%%%%%%%%%%%%%%%%%%%

\begin{tikzpicture}[scale=1]

    \GraphInit[vstyle=Classic]
    \SetGraphUnit{10}
    \tikzset{VertexStyle/.append style = {minimum size = 6pt, inner sep = 0pt}}

    \Vertex[L=$u_{11}$,x=0.5,y=2,Lpos=90]{A1}
    \Vertex[L=$u_{12}$,x=1.5,y=2,Lpos=90]{A2}
    \Vertex[L=$u_{13}$,x=2.5,y=2,Lpos=90]{A3}

    \Vertex[L=$v_{11}$,x=0,y=0,Lpos=-90]{B1}
    \Vertex[L=$v_{12}$,x=1,y=0,Lpos=-90]{B2}
    \Vertex[L=$v_{13}$,x=2,y=0,Lpos=-90]{B3}
    \Vertex[L=$v_{14}$,x=3,y=0,Lpos=-90]{B4}

    \Edge(A1)(B1)
    \Edge(A1)(B2)
    \Edge(A1)(B3)
    \Edge(A2)(B3)
    \Edge(A3)(B3)
    \Edge[label=$e_{11}$](A3)(B4)

\begin{scope}[xshift=4cm]
    \Vertex[L=$u_{21}$,x=0.5,y=2,Lpos=90]{A1}
    \Vertex[L=$u_{22}$,x=1.5,y=2,Lpos=90]{A2}
    \Vertex[L=$u_{23}$,x=2.5,y=2,Lpos=90]{A3}

    \Vertex[L=$v_{21}$,x=0,y=0,Lpos=-90]{B1}
    \Vertex[L=$v_{22}$,x=1,y=0,Lpos=-90]{B2}
    \Vertex[L=$v_{23}$,x=2,y=0,Lpos=-90]{B3}
    \Vertex[L=$v_{24}$,x=3,y=0,Lpos=-90]{B4}

    \Edge[label=$e_{21}$](A3)(B4)
    \Edge(A1)(B1)
    \Edge(A1)(B2)
    \Edge(A1)(B3)
    \Edge(A1)(B4)
    \Edge(A2)(B4)

\end{scope}

\begin{scope}[xshift=8cm]
    \Vertex[L=$u_{31}$,x=0.5,y=2,Lpos=90]{A1}
    \Vertex[L=$u_{32}$,x=1.5,y=2,Lpos=90]{A2}
    \Vertex[L=$u_{33}$,x=2.5,y=2,Lpos=90]{A3}

    \Vertex[L=$v_{31}$,x=0,y=0,Lpos=-90]{B1}
    \Vertex[L=$v_{32}$,x=1,y=0,Lpos=-90]{B2}
    \Vertex[L=$v_{33}$,x=2,y=0,Lpos=-90]{B3}
    \Vertex[L=$v_{34}$,x=3,y=0,Lpos=-90]{B4}

    \Edge(A1)(B1)
    \Edge(A2)(B1)
    \Edge(A2)(B2)
    \Edge(A3)(B2)
    \Edge(A3)(B3)
    \Edge[label=$e_{31}$](A3)(B4)
\end{scope}

\begin{scope}[xshift=12cm]
    \Vertex[L=$u_{41}$,x=0.5,y=2,Lpos=90]{A1}
    \Vertex[L=$u_{42}$,x=1.5,y=2,Lpos=90]{A2}
    \Vertex[L=$u_{43}$,x=2.5,y=2,Lpos=90]{A3}

    \Vertex[L=$v_{41}$,x=0,y=0,Lpos=-90]{B1}
    \Vertex[L=$v_{42}$,x=1,y=0,Lpos=-90]{B2}
    \Vertex[L=$v_{43}$,x=2,y=0,Lpos=-90]{B3}
    \Vertex[L=$v_{44}$,x=3,y=0,Lpos=-90]{B4}

	\Edge[label=$e_{41}$](A3)(B4)
    \Edge(A1)(B1)
    \Edge(A2)(B1)
    \Edge(A2)(B2)
    \Edge(A2)(B3)
    \Edge(A2)(B4)
   
\end{scope}

\begin{scope}[xshift=16cm]
    \Vertex[L=$u_{51}$,x=0.5,y=2,Lpos=90]{A1}
    \Vertex[L=$u_{52}$,x=1.5,y=2,Lpos=90]{A2}
    \Vertex[L=$u_{53}$,x=2.5,y=2,Lpos=90]{A3}

    \Vertex[L=$v_{51}$,x=0,y=0,Lpos=-90]{B1}
    \Vertex[L=$v_{52}$,x=1,y=0,Lpos=-90]{B2}
    \Vertex[L=$v_{53}$,x=2,y=0,Lpos=-90]{B3}
    \Vertex[L=$v_{54}$,x=3,y=0,Lpos=-90]{B4}

    \Edge(A1)(B1)
    \Edge(A1)(B2)
    \Edge(A2)(B2)
    \Edge(A2)(B3)
    \Edge(A3)(B3)
    \Edge[label=$e_{51}$](A3)(B4)
\end{scope}

\end{tikzpicture}}
  \caption{An odd uniform army of caterpillars of type $(3,4)$.}
  \label{fig:army_of_caterpillars}
\end{figure}
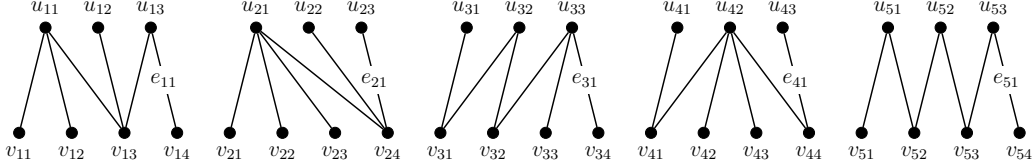

The next theorem shows that every odd uniform army of caterpillars has an
edge-magic labeling.
Figure~\ref{fig:example_caterpillars} shows an example of such a labeling for an
army of five caterpillars.

  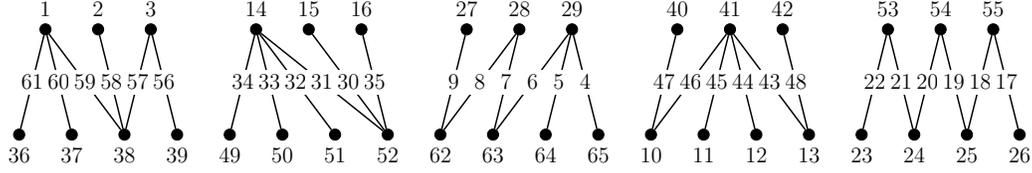
\begin{figure}
    \centering
    \scalebox{.7}{%%%%%%%%%%%%%%%%%%%%%%%%%%%%%%%%%%%%%%%%%%%%%%%%%%%%%%%%%%%%
% Created: sáb 24 jun 2017 16:29:56 -03
%
% Author: Carla N. Lintzmayer, carlanl@ic.unicamp.br
%
%%%%%%%%%%%%%%%%%%%%%%%%%%%%%%%%%%%%%%%%%%%%%%%%%%%%%%%%%%%%

\begin{tikzpicture}[scale=1]

    \GraphInit[vstyle=Classic]
    \SetGraphUnit{10}
    \tikzset{VertexStyle/.append style = {minimum size = 6pt, inner sep = 0pt}}
    \tikzset{LabelStyle/.append style = {fill=white, minimum size=0em,
    rectangle, inner sep=0.1cm}}

    \Vertex[L=1,x=0.5,y=2,Lpos=90]{A1}
    \Vertex[L=2,x=1.5,y=2,Lpos=90]{A2}
    \Vertex[L=3,x=2.5,y=2,Lpos=90]{A3}

    \Vertex[L=36,x=0,y=0,Lpos=-90]{B1}
    \Vertex[L=37,x=1,y=0,Lpos=-90]{B2}
    \Vertex[L=38,x=2,y=0,Lpos=-90]{B3}
    \Vertex[L=39,x=3,y=0,Lpos=-90]{B4}

    \Edge[label=61](A1)(B1)
    \Edge[label=60](A1)(B2)
    \Edge[label=59](A1)(B3)
    \Edge[label=58](A2)(B3)
    \Edge[label=57](A3)(B3)
    \Edge[label=56](A3)(B4)

\begin{scope}[xshift=4cm]
    \Vertex[L=14,x=0.5,y=2,Lpos=90]{A1}
    \Vertex[L=15,x=1.5,y=2,Lpos=90]{A2}
    \Vertex[L=16,x=2.5,y=2,Lpos=90]{A3}

    \Vertex[L=49,x=0,y=0,Lpos=-90]{B1}
    \Vertex[L=50,x=1,y=0,Lpos=-90]{B2}
    \Vertex[L=51,x=2,y=0,Lpos=-90]{B3}
    \Vertex[L=52,x=3,y=0,Lpos=-90]{B4}

    \Edge[label=35](A3)(B4)
    \Edge[label=34](A1)(B1)
    \Edge[label=33](A1)(B2)
    \Edge[label=32](A1)(B3)
    \Edge[label=31](A1)(B4)
    \Edge[label=30](A2)(B4)

\end{scope}

\begin{scope}[xshift=8cm]
    \Vertex[L=27,x=0.5,y=2,Lpos=90]{A1}
    \Vertex[L=28,x=1.5,y=2,Lpos=90]{A2}
    \Vertex[L=29,x=2.5,y=2,Lpos=90]{A3}

    \Vertex[L=62,x=0,y=0,Lpos=-90]{B1}
    \Vertex[L=63,x=1,y=0,Lpos=-90]{B2}
    \Vertex[L=64,x=2,y=0,Lpos=-90]{B3}
    \Vertex[L=65,x=3,y=0,Lpos=-90]{B4}

    \Edge[label=9](A1)(B1)
    \Edge[label=8](A2)(B1)
    \Edge[label=7](A2)(B2)
    \Edge[label=6](A3)(B2)
    \Edge[label=5](A3)(B3)
    \Edge[label=4](A3)(B4)
\end{scope}

\begin{scope}[xshift=12cm]
    \Vertex[L=40,x=0.5,y=2,Lpos=90]{A1}
    \Vertex[L=41,x=1.5,y=2,Lpos=90]{A2}
    \Vertex[L=42,x=2.5,y=2,Lpos=90]{A3}

    \Vertex[L=10,x=0,y=0,Lpos=-90]{B1}
    \Vertex[L=11,x=1,y=0,Lpos=-90]{B2}
    \Vertex[L=12,x=2,y=0,Lpos=-90]{B3}
    \Vertex[L=13,x=3,y=0,Lpos=-90]{B4}

	\Edge[label=48](A3)(B4)
    \Edge[label=47](A1)(B1)
    \Edge[label=46](A2)(B1)
    \Edge[label=45](A2)(B2)
    \Edge[label=44](A2)(B3)
    \Edge[label=43](A2)(B4)
   
\end{scope}

\begin{scope}[xshift=16cm]
    \Vertex[L=53,x=0.5,y=2,Lpos=90]{A1}
    \Vertex[L=54,x=1.5,y=2,Lpos=90]{A2}
    \Vertex[L=55,x=2.5,y=2,Lpos=90]{A3}

    \Vertex[L=23,x=0,y=0,Lpos=-90]{B1}
    \Vertex[L=24,x=1,y=0,Lpos=-90]{B2}
    \Vertex[L=25,x=2,y=0,Lpos=-90]{B3}
    \Vertex[L=26,x=3,y=0,Lpos=-90]{B4}

    \Edge[label=22](A1)(B1)
    \Edge[label=21](A1)(B2)
    \Edge[label=20](A2)(B2)
    \Edge[label=19](A2)(B3)
    \Edge[label=18](A3)(B3)
    \Edge[label=17](A3)(B4)
\end{scope}

\end{tikzpicture}}
    \caption{Example of an edge-magic labeling with magic constant 98 on an odd
    uniform army of five caterpillars of type $(3,4)$.}
    \label{fig:example_caterpillars}
  \end{figure}

\begin{theorem}\label{thm:caterpillars}
  Every odd uniform army of caterpillars has an edge-magic labeling.
\end{theorem}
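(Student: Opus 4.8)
The plan is to reduce the theorem to a packing problem. Set $t=r+s$, so $\calA$ has $n=pt$ vertices and $m=p(t-1)$ edges, and the labels form $\{1,\dots,p(2t-1)\}$. For each caterpillar $C_i$ I would give the part $u_{i1},\dots,u_{ir}$ a block of $r$ consecutive integers in this order, the part $v_{i1},\dots,v_{is}$ a block of $s$ consecutive integers in this order, and the edges $e_{i(t-1)},\dots,e_{i1}$ a block of $t-1$ consecutive integers. The point of the prescribed orders is that the vertex-sum of an edge $u_{ij}v_{ik}$ depends only on $j+k$. First I would establish the structural fact that, as one runs through $e_{i(t-1)},\dots,e_{i1}$, the quantity $j+k$ takes each value of $\{2,3,\dots,t\}$ exactly once: the non-crossing condition on the vertex orders makes the edges of $C_i$ a monotone staircase in the $(j,k)$ grid from $(1,1)=e_{i(t-1)}$ to $(r,s)=e_{i1}$; such a staircase visits exactly $t-1$ lattice points, matching the $t-1$ edges of the tree, and along it $j+k$ increases by exactly one per step. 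Hence the edge-sums of $C_i$ are $t-1$ consecutive integers; writing $a_i,b_i,c_i$ for the least $u$-, $v$- and edge-labels of $C_i$, assigning the edge block in reverse order makes every edge of $C_i$ have the common sum $k_i=a_i+b_i+c_i+(t-2)$.

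It then remains to place the $3p$ blocks so that they tile $\{1,\dots,p(2t-1)\}$ and all $k_i$ coincide. I would cut $\{1,\dots,p(2t-1)\}$ into $p$ consecutive \emph{super-blocks} of length $2t-1$, each split in order into a $u$-region of length $r$, an edge-region of length $t-1$, and a $v$-region of length $s$. Sending the $u$-, edge- and $v$-block of $C_i$ into these three regions of super-blocks numbered $\sigma_u(i),\sigma_e(i),\sigma_v(i)\in\{0,\dots,p-1\}$ yields a tiling exactly when $\sigma_u,\sigma_e,\sigma_v$ are permutations of $\{0,\dots,p-1\}$, and since each region sits at a fixed offset inside its super-block one gets $a_i+b_i+c_i=(2t-1)\bigl(\sigma_u(i)+\sigma_e(i)+\sigma_v(i)\bigr)+(3r+s+2)$. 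Thus all the magic constants $k_i$ agree as soon as $\sigma_u(i)+\sigma_e(i)+\sigma_v(i)$ is independent of $i$.

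The construction, and the role of oddness, appear here. I would take $\sigma_u(i)=i-1$, the slope-$+1$ shift $\sigma_v(i)=\bigl(i-1+(p-1)/2\bigr)\bmod p$, and the slope-$(-2)$ map $\sigma_e(i)=\bigl((p-1)-2(i-1)\bigr)\bmod p$. Here $\sigma_e$ is a permutation precisely because $\gcd(2,p)=1$, that is, because $p$ is odd; moreover, summing over $i$ and using that permutations of $\{0,\dots,p-1\}$ have equal sums forces the common column value to equal $3(p-1)/2$, an integer only when $p$ is odd, so oddness is genuinely unavoidable. The step I expect to be most delicate is verifying that $\sigma_u(i)+\sigma_e(i)+\sigma_v(i)$ is exactly constant rather than merely constant on average: since the raw slopes sum to $1+1-2=0$, a transition-by-transition check shows the column sum is unchanged except possibly where the slope-$+1$ shift or the slope-$(-2)$ map wraps modulo $p$, and the offsets $(p-1)/2$ and $p-1$ are chosen precisely so that these two wraps fall at the same transition and cancel. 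Once this constancy is confirmed, every $k_i$ equals one constant $k$, each element of $\{1,\dots,p(2t-1)\}$ is used exactly once as a vertex or edge label, and the resulting $f$ is the desired edge-magic labeling of $\calA$.
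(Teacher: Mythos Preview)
Your proposal is correct and is essentially the paper's own proof: the paper sets $x=2t-1$, cuts $\{1,\dots,px\}$ into $p$ super-blocks each with a $u$-region, an edge-region, and a $v$-region (in that order), and uses exactly your three permutations $\sigma_u(i)=i-1$, $\sigma_v(i)=\bigl(i-1+\tfrac{p-1}{2}\bigr)\bmod p$, and $\sigma_e(i)=(2p-2i+1)\bmod p=\bigl((p-1)-2(i-1)\bigr)\bmod p$. Your structural ``staircase'' fact is what the paper packages as the observation that a \emph{well-behaved} labeling of a single caterpillar has constant edge-sum (Claim~\ref{claim:magic-cat}), and your tiling-by-permutations argument is the content of Claim~\ref{claim:different-cat}; the constant column sum $3(p-1)/2$ you derive is precisely the coefficient $\tfrac{3p-3}{2}$ of $x$ in the paper's magic constant.
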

\begin{proof}
  Let $\calA$ be an odd uniform army of caterpillars of type $(r,s)$ and let
  $C_1, \ldots, C_p$ be the caterpillars in $\calA$.
  For every ${1\leq i \leq p}$, denote the vertices of $C_i$ by
  $u_{i1},\ldots,u_{ir}$, $v_{i1},\ldots,v_{is}$ and the edges of $C_i$
  by~$e_{i1},\ldots,e_{i(r+s-1)}$ as described in the beginning of this section. 
    
  For simplicity, we use $x$ for the sum of the number of vertices and the
  number of edges in a caterpillar $C_i$, i.e., $x=2(r+s)-1$.
  We define an edge-magic labeling $f$ of $\calA$ as follows: 
  \begin{enumerate}[(i)]%
      \item $f(u_{ij}) = j + (i-1)x$ for any $1 \leq i \leq p$ and $1 \leq j
      \leq r$;\label{enum:i}
      \item $f(v_{ij}) = (2r+s-1) + j + \big((\frac{p-1}{2}+i-1) \modulo
      p\big)x$ for any $1\leq i \leq p$ and $1\leq j\leq s$;\label{enum:ii}
      \item $f(e_{ij}) = r + j + \big((2p-2i+1) \modulo p\big)x$ for any $1 \leq
      i \leq p$ and $1 \leq j \leq r+s-1$.\label{enum:iii}
  \end{enumerate}

  The intuition behind the above labeling can be better seen in the next
  diagram, which contains the labeling for the army of caterpillars depicted in
  Figure~\ref{fig:army_of_caterpillars}:
{\footnotesize\begin{equation*}
\begin{array}{lllllllll}
f(u_{11})=1  & f(u_{12})=2 & f(u_{13})=3  & f(e_{31})=4  & \ldots & f(e_{36})=9  & f(v_{41})=10 & \ldots & f(v_{44})=13\vspace{0.1cm} \\
f(u_{21})=14 & f(u_{22})=15 & f(u_{23})=16 & f(e_{51})=17 & \ldots & f(e_{56})=22 & f(v_{51})=23 & \ldots & f(v_{54})=26 \vspace{0.1cm}\\
f(u_{31})=27 & f(u_{32})=28 & f(u_{33})=29 & f(e_{21})=30 & \ldots & f(e_{26})=35 & f(v_{11})=36 & \ldots & f(v_{14})=39 \vspace{0.1cm}\\
f(u_{41})=40 & f(u_{42})=41 & f(u_{43})=42 & f(e_{41})=43 & \ldots & f(e_{46})=48 & f(v_{21})=49 & \ldots & f(v_{24})=52 \vspace{0.1cm}\\
f(u_{51})=53 & f(u_{52})=54 & f(u_{53})=55 & f(e_{11})=56 & \ldots & f(e_{16})=61 & f(v_{31})=62 & \ldots & f(v_{34})=65 \vspace{0.1cm}\\
\end{array}
\end{equation*}}%

  First we prove in Claim~\ref{claim:magic-cat} that, for every edge $uv$, we
  have ${f(u)+f(v)+f(uv) = k}$ for some constant $k=k(r,s,p)$ that depends only
  on $r$, $s$ and $p$.
  Then, in order to finish the proof, we show that all labels given by $f$ are
  different and lie between $1$ and $xp$. 

\begin{claim}\label{claim:magic-cat}
  For every $1 \leq i \leq p$ and every edge $uv$ in $C_i$,
  $$
    f(u)+f(v)+f(uv) = 4r + 2s + \left(\frac{3p-3}{2}\right)x.
  $$
\end{claim}
\noindent\emph{Proof of Claim~\ref{claim:magic-cat}}.
  We start by analyzing some particular labeling of a caterpillar.
  We say that a labeling of $C_i$ is \emph{well-behaved} if all the labels are
  different and it uses consecutive integers $a,a+1\dots,a+r$ respectively for
  the vertices $u_{i1},\ldots,u_{ir}$, consecutive integers $b,\dots,b+s$
  respectively for the vertices $v_{i1},\ldots,v_{is}$, and consecutive integers
  $c,\dots,c+(r+s-1)$ for the edges $e_{i1},\ldots,e_{i(r+s-1)}$.
  For any well-behaved labeling $f$ of a caterpillar $C_i$, a moment of thought
  shows that as $f(u)+f(v)+f(uv)$ is the same constant for every edge $uv$ of
  $C_i$.
  
  Clearly, the function $f$ defined above, restricted to $C_i$, which we denote
  by $f|_{C_i}$, is well-behaved. 
  Thus, as $f(u)+f(uv)+f(v)$ is the same constant for every edge $uv$ of $C_i$,
  it is enough to show that, for any $1 \leq i \leq p$,
  % $$
  %   f(u_{i1})+f(v_{i1})+f(u_{i1}v_{i1}) = 4r + 2s + \left(\frac{3p-3}{2}\right)x.
  % $$
  % In fact, note that
  \begin{align*}
    f(u_{i1})&+f(v_{i1})+f(u_{i1}v_{i1}) \\
        &= f(u_{i1})+f(v_{i1})+f(e_{i(r+s-1)}) \\
        &= 1 + (i-1)x + (2r+s-1) + 1 + \left(\left(\frac{p-1}{2}+i-1\right)
        (\modulo p)\right)x \\
        &\quad + r + (r+s-1) + ((2p-2i+1) (\modulo p))x \\
        &= 4r + 2s + \left(\frac{3p-3}{2}\right)x.
  \end{align*}
  Hence the claim is proved.\qed

  We now proceed with the proof of the theorem.
  As discussed before, it remains to prove that all labels given by $f$ are
  different and lie between $1$ and $xp$. 
  
  Clearly, all $f(u_{ij})$, $f(v_{ij})$ and $f(e_{ij})$ are positive.
  It is also clear that $f(u_{ij}) \leq xp$, for $1 \leq i \leq p$ and $1 \leq j
  \leq r$. 
  Since $\big((\frac{p-1}{2}+i-1) \modulo p\big)x\leq (p-1)x$ and $(2r+s-1) +
  j\leq x$, in view of~\eqref{enum:ii} in the definition of~$f$, we have
  $f(v_{ij})\leq xp$, for $1 \leq i \leq p$ and $1 \leq j \leq s$. 
  Also, since $\big((2i-1) \modulo p\big)x\leq (p-1)x$ and $r + j \leq
  2r+s-1<x$, in view of~\eqref{enum:iii} in the definition of $f$, we have
  $f(e_{ij})\leq xp$, for $1 \leq i \leq p$ and $1 \leq j \leq r+s-1$. 
  
  Since the numbers multiplying $x$ in \eqref{enum:i}, \eqref{enum:ii}, and
  \eqref{enum:iii} are always between $0$ and $p-1$, we have that all
  $f(u_{ij})$ are different, and the same happens to all $f(v_{ij})$ and to all
  $f(e_{ij})$, in their respective ranges. 
  Thus, we only need to prove the following claim.
  
  \begin{claim}\label{claim:different-cat}
    The following holds for every $1\leq i$, $k$, $q\leq p$, every $1\leq j\leq
    r$, every $1\leq \ell\leq s$, and every~$1\leq t\leq r+s-1$:
    \begin{enumerate}[(a)]
      \item $f(v_{k\ell})\neq f(e_{qt}); $\label{dif:b}
      \item $f(e_{qt})\neq f(u_{ij})$; \label{dif:c}
      \item $f(u_{ij})\neq f(v_{k\ell})$. \label{dif:a}
    \end{enumerate} 
    \end{claim}
  
\noindent\emph{Proof of Claim~\ref{claim:different-cat}}.
  For simplicity, we let $\alpha=\left(\left(\frac{p-1}{2}+k-1\right) \modulo
  p\right)$ and $\beta=\big( (2p-2q+1) \modulo p\big)$.
  To see that \eqref{dif:b} holds, suppose for a contradiction that
  $$
  f(v_{k\ell})= (2r+s-1) + \ell + \alpha x = r + t + \beta x=f(e_{qt}).
  $$
  Then, recalling that $x=2r+2s-1$, we have
  $$
  (\alpha +1 - \beta)x  = r+ t + s - \ell.
  $$
  But since $1\leq \ell\leq s$ and $1\leq t\leq r+s-1$, we have $0<r+ t + s -
  \ell<x$, and thus the above equality does not hold.

  We proceed similarly to prove that \eqref{dif:c} holds.
  Suppose for a contradiction that 
  $$
  f(e_{qt}) = r + t + \beta x =  j + (i-1)x = f(u_{ij}).
  $$
  Then,
  $$
  \big((i-1) - \beta \big)x = r+t-j.
  $$
  But since $1\leq j\leq r$ and $1\leq t\leq r+s-1$, we have $0 < r + t - j <
  x$, and thus the above equality does not hold.

  Finally, to see that \eqref{dif:a} holds, suppose for a contradiction that
  $$
  f(u_{ij}) =  j + (i-1)x = (2r+s-1) + \ell + \alpha x = f(v_{k\ell}).
  $$
  Then, 
  $$
  \big((i-1) - \alpha \big)x = 2r + s -1 + \ell - j.
  $$
  But since $1\leq j\leq r$ and $1\leq \ell\leq s$, we have $0 < 2r + s - 1 +
  \ell - j < x$, and thus the above equality does not hold.\qed

  Therefore, since we proved Claim~\ref{claim:magic-cat}, the proof of
  Theorem~\ref{thm:caterpillars} is complete.
\end{proof}

\subsection*{Acknowledgement} This research was conducted while the authors were
attending the 1\textordmasculine~WoPOCA: ``Workshop Paulista em Otimização,
Combinatória e Algoritmos''. 
We would like to thank the organisers of this workshop for the productive
environment.

\bibliographystyle{plain}
\bibliography{bibfile}

\end{document}